\newtheorem{thm}{Theorem}[section]
\newtheorem{lemma}[thm]{Lemma}
\newtheorem{proposition}[thm]{Proposition}
\newtheorem{definition}[thm]{Definition}
\newtheorem{corollary}[thm]{Corollary}
\newcommand{\p}{\mathbb{P}}
\newcommand{\cf}{\mathrm{cf}}
\begin{document}

\title{Guessing models imply the singular cardinal 
	hypothesis}

\author{John Krueger}

\address{John Krueger \\ Department of Mathematics \\ 
	University of North Texas \\
	1155 Union Circle \#311430 \\
	Denton, TX 76203}
\email{jkrueger@unt.edu}

\date{March 2019}

\thanks{2010 \emph{Mathematics Subject Classification:} 
	Primary 03E05; Secondary 03E40.}

\thanks{\emph{Key words and phrases.} \textsf{ISP}, \textsf{SCH}, guessing model, internally unbounded, 
	approximation property, countable covering property.}

\thanks{This material is based upon work supported by the National Science Foundation under Grant
	No. DMS-1464859.}

\begin{abstract}
	In this article we prove three main theorems: (1) guessing models are internally unbounded, 
	(2) for any regular cardinal $\kappa \ge \omega_2$, 
	$\textsf{ISP}(\kappa)$ implies that $\textsf{SCH}$ holds above $\kappa$, 
	and (3) forcing posets which have the 
	$\omega_1$-approximation property also 
	have the countable covering property. 
	These results solve open problems of Viale \cite{viale} and Hachtman and Sinapova \cite{sinapova}.
\end{abstract}

\maketitle

A major result in recent years on the consequences of forcing axioms is the theorem of M.\ Viale 
that the Proper Forcing Axiom (\textsf{PFA}) implies the Singular Cardinal Hypothesis (\textsf{SCH}). 
In fact, Viale showed that several strong 
combinatorial consequences of \textsf{PFA}, including  
the Mapping Reflection Principle (\textsf{MRP}) and 
the $P$-Ideal Dichotomy (\textsf{PID}), each imply \textsf{SCH} (\cite{vialepfa}, \cite{vialecovering}).

C.\ Weiss \cite{weiss} introduced a combinatorial principle $\textsf{ISP}(\kappa)$, for any regular 
cardinal $\kappa \ge \omega_2$, which is equivalent to $\kappa$ being supercompact in the case that $\kappa$ 
is inaccessible, but is also consistent when $\kappa$ is a small successor cardinal. 
In particular, $\textsf{ISP}(\omega_2)$ (abbreviated henceforth as \textsf{ISP}) 
is a consequence of \textsf{PFA}, and it in turn implies many of the 
strong consequences of \textsf{PFA}, such as the failure of square principles. 
Later Viale and Weiss \cite{vialeweiss} provided an 
alternative characterization of $\textsf{ISP}$ in terms of 
the existence of stationarily many elementary substructures which have a ``guessing'' property reminiscent of the 
approximation property in forcing theory.

In light of these developments, 
a natural question is whether 
$\textsf{ISP}$ implies \textsf{SCH}. 
Viale \cite{viale} made partial progress on this question by showing that \textsf{SCH} follows from an 
apparently stronger form of \textsf{ISP}, namely, the existence of stationarily 
many guessing models which are also internally unbounded. 
This result raises a number of additional 
questions, such 
as whether guessing models alone imply \textsf{SCH}, 
whether guessing models are always internally unbounded, 
and whether 
the $\omega_1$-approximation property of 
forcing posets implies the countable covering property. 
In this article we refine the results of Viale and Weiss 
described above 
and answer all of these questions in the affirmative.

\section{Guessing and covering}

For the remainder of the article, 
$N$ will usually denote an elementary substructure of $H(\theta)$ for some 
regular cardinal $\theta \ge \omega_2$, although we will not strictly require this for many of the definitions.

For a set or class $M$, a set $x \subseteq M$ is said to be \emph{bounded in $M$} if 
there exists $Y \in M$ such that $x \subseteq Y$. 

\begin{definition}
	A set $N$ is said to be \emph{guessing} 
	if for any set $x \subseteq N$ which is bounded in $N$, 
	if for all $a \in N \cap [N]^\omega$, 
	$x \cap a \in N$, 
	then there exists $E \in N$ such that $x = N \cap E$.
	\end{definition}

\begin{definition}
	For any regular cardinal $\kappa \ge \omega_2$, 
	let $\textsf{ISP}(\kappa)$ 
	be the statement that for any regular cardinal 
	$\theta \ge \kappa$, the collection of 
	guessing sets is stationary in $P_{\kappa}(H(\theta))$. 
	Let $\textsf{ISP}$ be the statement $\textsf{ISP}(\omega_2)$.
	\end{definition}

Being \emph{stationary} in $P_{\kappa}(H(\theta)) = 
\{ a \subseteq H(\theta) : |a| < \kappa \}$ means meeting every club, 
where a club is any cofinal subset of 
$P_\kappa(H(\theta))$ closed under unions of $\subseteq$-increasing 
sequences of length less than $\kappa$. 
The collection of all sets 
$N$ such that $N \cap \kappa \in \kappa$ is club in 
$P_{\kappa}(H(\theta))$, so $\textsf{ISP}(\kappa)$ implies stationarily 
many guessing models $N$ such that $N \cap \kappa \in \kappa$.

It is easy to prove from the definition that if $N$ is an elementary substructure which is guessing, 
then for any regular uncountable cardinal $\kappa \in N$, $\sup(N \cap \kappa)$ has uncountable cofinality.

\begin{definition}
	A set $N$ is said to be \emph{internally unbounded} if 
	for any countable set $x \subseteq N$ which is bounded 
	in $N$, there exists $y \in N \cap [N]^\omega$ 
	such that $x \subseteq y$.
	\end{definition}

Recall that $N$ has \emph{countable covering} if 
any countable subset of $N$ is covered by a countable 
set in $N$. 
Obviously, if $\sup(N \cap On)$ has cofinality 
$\omega$, then $N$ does not have this property, but under some typical assumptions, if $\sup(N \cap On)$ has 
uncountable cofinality then countable covering is 
equivalent to being internally unbounded.

Viale (\cite[Remark 4.3]{viale}) asked whether it is consistent to have a guessing model which is 
not internally unbounded. 
In \cite[Section 4]{JK30} 
we showed that \textsf{PFA} implies the existence of 
stationarily many 
elementary substructures $N$ of $H(\omega_2)$ 
of size $\omega_1$ such that $N$ is guessing but 
$\sup(N \cap \omega_2) = \omega$. 
Such models do not have countable covering, but they are 
internally unbounded according to Definition 1.3. 
This result solved an easy special case of Viale's 
question, but the next theorem provides the complete 
solution.

\begin{thm}
	Let $\theta \ge \omega_2$ be a regular cardinal, and suppose that  
	$N$ is an elementary substructure of $H(\theta)$ such that $\omega_1 \subseteq N$.  
	If $N$ is guessing, then $N$ is internally unbounded.
\end{thm}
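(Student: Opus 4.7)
The plan is to fix a countable $x \subseteq N$ bounded by some $Y \in N$ and produce a countable $y \in N$ with $x \subseteq y$. Using a bijection $f : |Y| \to Y$ in $N$ (available by elementarity since $Y \in N$ gives $|Y| \in N$), I replace $x$ by $f^{-1}[x]$ and may assume $x \subseteq \kappa$ for a cardinal $\kappa \in N$. I then proceed by induction on $\kappa$. The base cases $\kappa \le \omega_1$ are immediate from $\omega_1 \subseteq N$ (take $y = \omega$, or $y = \sup(x)+1$).

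For $\kappa \ge \omega_2$, the key step is to show $\sigma := \sup(x) \in N$. Suppose not. Then $\sigma$ is a limit ordinal --- any successor sup of $x$ would be attained by an element of $x \subseteq N$ --- and $\cf(\sigma) = \omega$ since $|x| = \omega$. Pick an increasing cofinal $\omega$-subsequence $x' \subseteq x$. I verify that $x' \cap a$ is finite for every $a \in N \cap [N]^\omega$. Letting $A = a \cap \mathrm{Ord} \in N$, since $\sup(A) \in N$ and $\sigma \notin N$ we have $\sup(A) \neq \sigma$. If $\sup(A) < \sigma$, then $x' \cap a = x' \cap A$ is an initial segment of the cofinal-in-$\sigma$ sequence $x'$ bounded below $\sigma$, hence finite. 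If $\sup(A) > \sigma$, I claim $A \cap \sigma$ must be bounded below $\sigma$; otherwise $\sigma$ would be an accumulation point of the countable set $A$, but the set of accumulation points of $A$ is a countable set of ordinals definable from $A$, so it lies in $N$ and (using $\omega_1 \subseteq N$) its elements are all in $N$, giving $\sigma \in N$ and contradicting the assumption. Hence $x' \cap a$ is finite, and thus in $N$. Guessing applied to $x'$ yields $E \in N$ with $x' = N \cap E$; since $\omega_1 \subseteq N$ and $x'$ is countable, elementarity forces $E$ to be countable with $E \subseteq N$, so $E = x' \in N$ and $\sigma = \sup(E) \in N$, the sought contradiction.

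With $\sigma \in N$ in hand: if $\sigma < \kappa$ then $|\sigma| < \kappa$, and the inductive hypothesis applied to $|\sigma|$ (via a bijection in $N$) produces a countable cover. The remaining case $\sigma = \kappa$ forces $\cf(\kappa) = \omega$. Here $x$ is cofinal in $\kappa \in N$, so I fix a cofinal $g : \omega \to \kappa$ in $N$; the piece $x \cap [g(n), g(n+1))$ is a countable subset of a set of cardinality $|g(n+1)| < \kappa$, and so by the inductive hypothesis admits a countable cover in $N$. The remaining difficulty is to arrange these covers uniformly so that their union lies in $N$.

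I expect this last uniform-patching step in the $\cf(\kappa) = \omega$ subcase to be the principal obstacle of the proof: a direct inductive choice of the covers need not yield an $N$-definable sequence, so one likely applies the guessing property a second time --- perhaps to a canonical coding of the sequence of covers --- to force the union into $N$.
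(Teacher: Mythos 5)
There is a genuine gap, and you have located it yourself: the subcase $\sup(x)=\kappa$ with $\cf(\kappa)=\omega$ (which is unavoidable, e.g.\ $\kappa=\aleph_\omega$, and recurs at lower cardinals inside your induction) is exactly where the entire difficulty of the theorem lives, and your proposal leaves it open. A countable union of countable sets each lying in $N$ need not lie in $N$, and there is no canonical choice of the covers $y_n$ that would make the sequence $\langle y_n : n<\omega\rangle$ definable over $N$; the suggestion of ``applying guessing to a coding of the sequence of covers'' is not yet an argument, because guessing cannot be applied to an arbitrary countable subset of $N$ --- one must first verify that all of its intersections with countable members of $N$ lie in $N$, and that is precisely what is unclear for such a sequence. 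Your preliminary steps are correct (the transfer of $x$ into a cardinal of $N$, the base case, and in particular the nice self-contained argument that $\sup(x)\in N$ via guessing applied to a cofinal $\omega$-subsequence), but they are detours that postpone rather than remove the patching problem.

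The missing idea, which is the entire content of the paper's proof, is to apply guessing not to $x$ or to a cofinal subsequence of it but to the countable family of \emph{finite} initial segments $x_n:=g[n]$, where $g:\omega\to x$ is a bijection; this family is a subset of $[Y]^{<\omega}\in N$, hence bounded in $N$. One then argues by dichotomy. If some countable $\mathcal X\in N$ meets $\{x_n : n<\omega\}$ in an infinite set, then (after intersecting $\mathcal X$ with $[Y]^{<\omega}$) the set $\bigcup\mathcal X$ is a countable member of $N$ covering $x$, since the $x_n$ are $\subseteq$-increasing with union $x$. Otherwise every such intersection is finite, hence a finite subset of $N$ and so a member of $N$; guessing then yields $E\in N$ with $\{x_n : n<\omega\}=N\cap E$, and the hypothesis $\omega_1\subseteq N$ forces $E$ to be countable, hence $E\subseteq N$ and $E=\{x_n : n<\omega\}\in N$, whence $x=\bigcup E\in N$. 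This argument needs no induction on $\kappa$ and no analysis of $\sup(x)$. If you insist on salvaging your outline, the same dichotomy applied to the finite sets $\{y_k : k\le n\}$ would close the $\cf(\kappa)=\omega$ subcase --- but at that point the induction is doing no work.
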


\begin{proof}
	Let $x \subseteq N$ be countably infinite and bounded in $N$. 
	Fix a set $Y \in N$ such that $x \subseteq Y$. 
	Our goal is to find a countable set $y$ in $N$ such that $x \subseteq y$. 
	Observe that by elementarity, the set $[Y]^{<\omega}$ is a member of $N$. 
	Fix a bijection $g : \omega \to x$, and for each $n$ let $x_n := g[n]$. 
	Then $x_m \subseteq x_n$ for all $m < n$, $\bigcup_n x_n = x$, and 
	$\{ x_n: n < \omega \} \subseteq [Y]^{<\omega}$.
	
	We consider two possibilities. 
	The first is that there exists $\mathcal X \in N \cap [N]^\omega$ such that 
	$$
	| \mathcal X \cap \{ x_n : n < \omega \} | = \omega.
	$$
	By intersecting $\mathcal X$ with $[Y]^{<\omega}$ if necessary, we may assume without loss of 
	generality that $\mathcal X \subseteq [Y]^{<\omega}$. 
	Since $\mathcal X$ is countable and its elements are finite, $y := \bigcup \mathcal X$ is a 
	countable subset of $Y$. 
	Also, $y \in N$ by elementarity.
	
	We claim that $x \subseteq y$, which completes the proof in this case. 
	Consider $a \in x$. 
	Fix $m$ such that $a \in x_m$. 
	Since $\mathcal X \cap \{ x_n : n < \omega \}$ is infinite, we can fix $n > m$ such that 
	$x_n \in \mathcal X$. 
	Then $a \in x_m \subseteq x_n \subseteq y$, so $a \in y$.
	
	The second possibility is that for all $\mathcal X \in N \cap [N]^\omega$, 
	$\mathcal X \cap \{ x_n : n < \omega \}$ is finite. 
	Since $N$ is closed under finite subsets, for all such $\mathcal X$, 
	$\mathcal X \cap \{ x_n : n < \omega \}$ is a member of $N$. 
	In this case we will show that $x$ itself is a 
	member of $N$, which completes the proof. 
	Since $N$ is guessing, we can fix $E \in N$ such that $\{ x_n : n < \omega \} = N \cap E$. 

	Observe that $E$ is countable. 
	Otherwise there would exist an injection of $\omega_1$ into $E$ in $N$ by elementarity. 
	Since $\omega_1 \subseteq N$, it would follow that $N \cap E$ is uncountable. 
	This is impossible since $N \cap E = \{ x_n : n < \omega \}$, which is countable. 
	As $E$ is countable, $E \subseteq N$ by elementarity. 
	So $\{ x_n : n < \omega \} = N \cap E = E$. 
	Therefore, the set $\{ x_n : n < \omega \}$ is a member of $N$. 
	Thus, $x = \bigcup \{ x_n : n < \omega \}$ is a member of $N$.
\end{proof}

\begin{corollary}
	Let $\kappa \ge \omega_2$ be a regular cardinal. 
	Then $\textsf{ISP}(\kappa)$ implies that for all regular cardinals $\theta \ge \kappa$, 
	there are stationarily many $N \in P_{\kappa}(H(\theta))$ such that $N$ is 
	guessing and internally unbounded.	
	\end{corollary}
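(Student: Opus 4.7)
The plan is to deduce the corollary directly from Theorem 1.4 by observing that the hypotheses of the theorem ($N \prec H(\theta)$ and $\omega_1 \subseteq N$) can be arranged on a club of $P_\kappa(H(\theta))$, so that any stationary subset of $P_\kappa(H(\theta))$ supplied by $\textsf{ISP}(\kappa)$ must meet that club.

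Concretely, fix a regular $\theta \ge \kappa$ and an arbitrary club $C \subseteq P_\kappa(H(\theta))$; the goal is to find $N \in C$ which is both guessing and internally unbounded. First I would verify that the set
\[
D := \{ N \in P_\kappa(H(\theta)) : N \prec H(\theta) \text{ and } \omega_1 \subseteq N \}
\]
is club in $P_\kappa(H(\theta))$. Cofinality uses $\kappa \ge \omega_2$: given any $a \in P_\kappa(H(\theta))$, form an elementary substructure of $H(\theta)$ of size $|a| + \aleph_1 < \kappa$ containing $a \cup \omega_1$. Closure under $\subseteq$-increasing unions of length less than $\kappa$ is routine, since both elementarity in $H(\theta)$ and the condition $\omega_1 \subseteq N$ pass to such unions. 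Consequently $C \cap D$ is still club.

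Now $\textsf{ISP}(\kappa)$ says the collection of guessing sets is stationary in $P_\kappa(H(\theta))$, so there is some guessing $N \in C \cap D$. Because $N \in D$, we have $N \prec H(\theta)$ and $\omega_1 \subseteq N$, whence Theorem 1.4 applies to conclude that $N$ is internally unbounded. Thus $N \in C$ is simultaneously guessing and internally unbounded, and since $C$ was arbitrary the set of such $N$ is stationary, as required.

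There is essentially no obstacle: the entire content of the corollary is packaged into Theorem 1.4, and the argument reduces to verifying the clubness of $D$, which is a standard closure and cofinality check using only $\kappa \ge \omega_2$. The only mild point to be careful about is ensuring $\omega_1 \subseteq N$ rather than merely $\omega_1 \in N$, but this is exactly what $D$ encodes and what the hypothesis of Theorem 1.4 demands.
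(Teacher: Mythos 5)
Your proof is correct and follows essentially the same route as the paper: intersect the stationary family of guessing sets given by $\textsf{ISP}(\kappa)$ with a club guaranteeing the hypotheses of Theorem 1.4, then apply that theorem. The only cosmetic difference is that the paper uses the club of $N$ with $N \cap \kappa \in \kappa$ (together with elementarity and the definability of $\omega_1$) to get $\omega_1 \subseteq N$, while you build the conditions $N \prec H(\theta)$ and $\omega_1 \subseteq N$ directly into your club $D$; both verifications are routine and equivalent in content.
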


\begin{proof}
	We already know that $\textsf{ISP}(\kappa)$ implies the existence of 
	stationarily many $N \in P_{\kappa}(H(\theta))$ 
	such that $N$ is guessing and $N \cap \kappa \in \kappa$. 
	By definability, $\omega_1 \in N \cap \kappa$, and it follows that $\omega_1 \subseteq N$. 
	By Theorem 1.4, $N$ is internally unbounded.
	\end{proof}

Viale \cite[Section 7.2]{viale} proved that the existence of stationarily many internally unbounded 
guessing models implies \textsf{SCH}, but it was unknown whether guessing models alone imply \textsf{SCH}. 
This problem also appears in \cite[Section 1]{sinapova}. 
By Corollary 1.5 together with Viale's result, 
\textsf{ISP} does indeed imply \textsf{SCH}.\footnote{After announcing the results of this paper, we learned that 
S.\ Hachtman had recently and independently 
proven that \textsf{ISP} 
implies \textsf{SCH} using essentially the same 
argument as presented in this section.}

\begin{corollary}
	\textsf{ISP} implies \textsf{SCH}.
	\end{corollary}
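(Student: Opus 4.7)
The plan is to derive Corollary 1.6 by directly combining the two ingredients already assembled in this section: the refinement of \textsf{ISP} provided by Corollary 1.5, and Viale's theorem from \cite[Section 7.2]{viale}.

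First, I would specialize Corollary 1.5 to the case $\kappa = \omega_2$. Unpacking the definition, \textsf{ISP} is exactly $\textsf{ISP}(\omega_2)$, so Corollary 1.5 gives, for every regular cardinal $\theta \ge \omega_2$, a stationary set of $N \in P_{\omega_2}(H(\theta))$ such that $N$ is simultaneously guessing and internally unbounded. This is the precise hypothesis appearing in Viale's theorem.

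Second, I would invoke Viale's result that the existence of stationarily many internally unbounded guessing models implies \textsf{SCH}, exactly as cited in the paragraph preceding the corollary. Applying this to the stationary set produced in the previous step yields \textsf{SCH}, completing the proof.

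There is no real obstacle here; the new content has already been delivered in Theorem 1.4 and Corollary 1.5, where the passage from ``guessing'' to ``guessing and internally unbounded'' is made. Once that upgrade is available, Viale's earlier implication does all the remaining work, so the proof of Corollary 1.6 is essentially a one-line citation.
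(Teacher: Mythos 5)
Your proposal is correct and matches the paper's own argument: Corollary 1.5 with $\kappa = \omega_2$ supplies stationarily many guessing models that are internally unbounded, and Viale's theorem from \cite[Section 7.2]{viale} then yields \textsf{SCH}. This is exactly the one-line combination the paper intends.
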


\section{\textsf{ISP} and \textsf{SCH}}

In the previous section we showed that guessing models 
are internally unbounded, which combined with 
Viale's argument \cite[Section 7.2]{viale} proves that 
$\textsf{ISP}$ implies $\textsf{SCH}$. 
S. Hachtman and D. Sinapova \cite{sinapova} asked a more general question, which is 
whether for a regular cardinal $\kappa \ge \omega_2$, $\textsf{ISP}(\kappa)$ implies 
$\textsf{SCH}$ above $\kappa$. 
In this section we solve this problem in the affirmative. 
We note that our proof avoids the idea of internally unbounded models entirely.

We will in fact prove something a bit stronger.

\begin{thm}
	Let $\kappa \ge \omega_2$ be regular and 
	assume that $\textsf{ISP}(\kappa)$ holds. 
	Then either $\kappa$ is supercompact, or 
	$\textsf{SCH}$ holds.
	\end{thm}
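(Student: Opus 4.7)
The plan is to prove the contrapositive: under $\textsf{ISP}(\kappa)$, the failure of $\textsf{SCH}$ forces $\kappa$ to be supercompact. First, by Silver's theorem applied to a minimal counterexample, one may assume $\textsf{SCH}$ fails at a singular strong limit cardinal $\mu$ with $\cf(\mu) = \omega$ and $\mu^{\omega} > \mu^+$, and fix an increasing sequence $\langle \mu_n : n < \omega\rangle$ of regular cardinals cofinal in $\mu$ together with a bad scale $\vec f = \langle f_\alpha : \alpha < \mu^+\rangle$ in $\prod_n \mu_n$ witnessing this failure. Note that when $\kappa$ itself is inaccessible the conclusion follows directly from Weiss's theorem that $\textsf{ISP}(\kappa)$ implies $\kappa$ is supercompact, so the real work is in the non-inaccessible case.

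Next, for each sufficiently large regular $\theta$, I would define a $\kappa$-list $\langle d_a : a \in P_\kappa(H(\theta))\rangle$ with $d_a \subseteq a$ encoding scale data at $\mu$; a natural candidate is to let $d_a$ record the $\alpha \in a \cap \mu^+$ whose scale function $f_\alpha$ is pointwise bounded by a designated characteristic function attached to $a$ (e.g.\ built from pointwise suprema of $\vec f$ along a cofinal sequence in $a\cap\mu^+$). The central technical step is then to verify that this list is slender in the Viale--Weiss sense: $d_a \cap x \in a$ for every countable $x \subseteq H(\theta)$ and club-many $a$. Slenderness should follow from the observation that the action of $\vec f$ on any countable index set is already fully accounted for inside a small elementary submodel containing that set, combined with basic PCF facts about bad scales. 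Applying the slender-list characterization of $\textsf{ISP}(\kappa)$, the list then admits an ineffable branch $D\subseteq H(\theta)$, meaning that $\{a : d_a = D \cap a\}$ is stationary in $P_\kappa(H(\theta))$.

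Finally, the standard conversion of an ineffable branch into a fine, normal, $\kappa$-complete ultrafilter on $P_\kappa(\theta)$ would yield that $\kappa$ is $\theta$-supercompact; since $\theta$ was arbitrary, $\kappa$ is supercompact, as required. The main obstacle will be the design of the $d_a$'s: the encoding must be rich enough that an ineffable branch through the list truly forces a supercompactness measure (rather than a degenerate one), yet sufficiently simple on countable sets for slenderness to hold. The precise PCF structure of the bad scale at $\mu$ is what should make this delicate balance possible, and this interplay between scale behavior and elementarity in $P_\kappa(H(\theta))$ is where I expect the combinatorial heart of the proof to lie.
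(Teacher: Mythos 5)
There is a genuine gap, and it is located at the final step of your plan: there is no ``standard conversion of an ineffable branch into a fine, normal, $\kappa$-complete ultrafilter on $P_\kappa(\theta)$.'' An ineffable branch through a single slender $\kappa$-list is a far weaker object than a measure. Indeed, $\textsf{ISP}(\omega_2)$ follows from \textsf{PFA} and hence is consistent; it supplies ineffable branches for \emph{every} slender $\omega_2$-list, and yet $\omega_2$ is not supercompact, not even measurable. Weiss's theorem that $\textsf{ISP}(\kappa)$ implies supercompactness requires $\kappa$ to be strongly inaccessible, and that hypothesis is used essentially. Since your strategy in the non-inaccessible case is still to conclude ``$\kappa$ is $\theta$-supercompact,'' which is outright impossible for a successor cardinal, the argument cannot close: when $\kappa$ is not inaccessible, the failure of \textsf{SCH} must instead be refuted \emph{directly}, and your proposal contains no mechanism for doing so. In addition, the construction of the list $\langle d_a \rangle$ from a bad scale and the verification of slenderness are only sketched as hopes (you acknowledge this yourself), so even the intermediate step is not established.

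For contrast, the paper's proof splits on whether $2^\omega < \kappa$ rather than on inaccessibility. If $2^\omega < \kappa$, a guessing model $N \prec H(\kappa)$ with $2^\omega < N \cap \kappa \in \kappa$ absorbs $P(\mu)$ for each $\mu < N \cap \kappa$ (every $x \subseteq \mu$ is guessed and the guess $E \subseteq \mu$ lies inside $N$, so $x = E \in N$), which shows $\kappa$ is a strong limit, hence inaccessible, hence supercompact by Weiss; \textsf{SCH} above $\kappa$ then follows from Solovay. If $2^\omega \ge \kappa$, the paper proves \textsf{SCH} directly: by Silver's theorem one reduces to showing $\lambda^\omega = \lambda^+$ for singular $\lambda > \kappa$ of cofinality $\omega$ with $\mu^\omega < \lambda$ for all $\mu < \lambda$, and this is done by combining Viale's covering matrix $\langle K(n,\beta)\rangle$ (Lemmas 2.3 and 2.4) with a guessing model $N$: one shows the integers $k_x$ are bounded, extracts $E_m \in N$ guessing $K(m,\sup(N\cap\lambda^+))$, and verifies that $S := E_m$ witnesses the covering hypothesis of Lemma 2.4. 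You would need to replace your final step with an argument of this kind.
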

	
\begin{proposition}
	Let $\kappa \ge \omega_2$ be regular and assume 
	that $\textsf{ISP}(\kappa)$ holds. 
	If $2^\omega < \kappa$, then $\kappa$ is supercompact. 
	Hence, \textsf{SCH} holds above $\kappa$.
\end{proposition}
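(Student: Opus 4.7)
The plan is to reduce the proposition to the characterization theorem of Weiss \cite{weiss} and Viale--Weiss \cite{vialeweiss}, which asserts that for an inaccessible cardinal $\kappa$, $\textsf{ISP}(\kappa)$ is equivalent to the supercompactness of $\kappa$. Since $\kappa$ is already regular, it will suffice to upgrade the hypothesis $2^\omega < \kappa$ to the full strong limit condition $2^\lambda < \kappa$ for all $\lambda < \kappa$. The concluding assertion that $\textsf{SCH}$ holds above $\kappa$ will then follow from Solovay's theorem that $\textsf{SCH}$ holds above any supercompact cardinal.

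To prove the strong limit assertion, I would argue by contradiction. Suppose $\lambda < \kappa$ satisfies $2^\lambda \ge \kappa$; necessarily $\omega < \lambda$. For a sufficiently large regular $\theta$, $\textsf{ISP}(\kappa)$ together with the club conditions $N \cap \kappa \in \kappa$ and $N \cap \kappa > \max(\lambda, 2^\omega)$ yields a guessing $N \prec H(\theta)$ of size less than $\kappa$ with $\lambda + 1 \subseteq N$ and $2^\omega \subseteq N$. Since $|N| < \kappa \le 2^\lambda$, a pigeonhole on the at-most-$|N|$ sets of the form $N \cap E'$, as $E'$ ranges over $N \cap \mathcal{P}(\lambda)$, produces some $E \subseteq \lambda$ that is not of this form.

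The key verification is that this $E$ satisfies the hypothesis of the guessing property, so that the nonexistence of a witnessing $E' \in N$ yields the desired contradiction. Since $\lambda \subseteq N$, the set $E$ is a subset of $N$ bounded in $N$ by $\lambda$. For any $a \in N \cap [N]^\omega$, elementarity gives $\mathcal{P}(a) \in N$ along with a bijection from $2^\omega$ onto $\mathcal{P}(a)$ inside $N$; since $2^\omega \subseteq N$, this forces $\mathcal{P}(a) \subseteq N$, and therefore $E \cap a \in N$. The main obstacle is really a conceptual observation rather than a technical hurdle: the hypothesis $2^\omega < \kappa$ is exactly what makes the inclusion $2^\omega \subseteq N$ compatible with $|N| < \kappa$, and this inclusion in turn forces the countable approximations of any bounded $E \subseteq \lambda$ back into $N$. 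Once this is noticed, the pigeonhole step and the appeal to the Weiss / Viale--Weiss equivalence are routine.
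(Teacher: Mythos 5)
Your proposal is correct and takes essentially the same approach as the paper: reduce to showing that $\kappa$ is strongly inaccessible (then quote Weiss and Solovay), and for $\lambda < \kappa$ fix a guessing $N$ with $\max(\lambda,2^\omega) < N \cap \kappa \in \kappa$, so that $\mathcal{P}(a) \subseteq N$ for every countable $a \in N$ and hence every $E \subseteq \lambda$ satisfies the guessing hypothesis and is of the form $N \cap E'$ with $E' \in N$. The only cosmetic difference is that you conclude via a pigeonhole contradiction with $|N| < \kappa \le 2^\lambda$, while the paper directly observes $\mathcal{P}(\lambda) \subseteq N$ and hence $2^\lambda \le |N| < \kappa$.
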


\begin{proof}
	If $\kappa$ is strongly inaccessible and 
	$\textsf{ISP}(\kappa)$ holds, then $\kappa$ is 
	supercompact by \cite[Theorem 2.10]{weiss}. 
	And if $\kappa$ is supercompact, then 
	$\textsf{SCH}$ holds above $\kappa$ 
	by a well-known result of Solovay 
	(\cite[Theorem 20.8]{jech}).
	So it suffices to show that $\kappa$ is strongly 
	inaccessible.

	Let $\mu < \kappa$ be a cardinal 
	and we will show that $|P(\mu)| < \kappa$. 
	Using $\textsf{ISP}(\kappa)$, we can fix an elementary substructure $N$ of 
	$H(\kappa)$ of size less than $\kappa$ such that $N \cap \kappa \in \kappa$, 
	$N \cap \kappa$ is larger than $2^\omega$ and $\mu$, and $N$ is guessing. 
	It suffices to show that $P(\mu) \subseteq N$.
	
	Let $x \subseteq \mu$. 
	Then $x$ is a subset of $N$ which is bounded in $N$. 
	Consider $a \in N \cap [N]^\omega$. 
	Since $2^\omega < N \cap \kappa$, $P(a) \subseteq N$. 
	In particular, $a \cap x \in N$. 
	As $N$ is guessing, 
	it follows that there exists $E \in N$ 
	such that $x = N \cap E$. 
	By intersecting $E$ with $\mu$ if necessary, we may assume without loss of 
	generality that $E \subseteq \mu$. 
	Since $\mu$ is a subset of $N$, so is $E$, and hence 
	$x = N \cap E = E$. 
	Thus, $x \in N$, as desired.
	\end{proof}

See \cite[Theorem 2.1]{sinapova} for a similar argument.

Fix a regular cardinal $\kappa \ge \omega_2$ for the remainder of the section, and assume that 
$\textsf{ISP}(\kappa)$ holds. 
If $2^\omega < \kappa$, then $\kappa$ is supercompact, and we are done. 
Assume that $2^\omega \ge \kappa$. 
We will show that \textsf{SCH} holds. 

By a well-known theorem of Silver, the first cardinal 
for which \textsf{SCH} fails, if it exists, 
has cofinality $\omega$ (\cite[Theorem 8.13]{jech}). 
Let $\lambda$ be a singular cardinal of 
cofinality $\omega$, and assume that 
\textsf{SCH} holds below $\lambda$. 
If \textsf{SCH} fails at $\lambda$, that means that $2^\omega < \lambda$ 
and $\lambda^\omega > \lambda^+$. 
Now $2^\omega \ge \kappa$, so $\lambda > \kappa$. 
Since \textsf{SCH} holds below $\lambda$, 
an easy inductive argument shows that 
for all cardinals $\mu < \lambda$, $\mu^\omega < \lambda$ 
(\cite[Theorem 5.20]{jech}).

Putting it all together, assuming $\textsf{ISP}(\kappa)$ 
and $2^\omega \ge \kappa$, \textsf{SCH} follows from 
the statement: for all cardinals $\lambda > \kappa$ of 
cofinality $\omega$, if $\mu^\omega < \lambda$ 
for all $\mu < \lambda$, then $\lambda^\omega = \lambda^+$. 
Our proof of this statement follows along the lines of 
Viale's proof \cite[Section 7.2]{viale}, but avoids 
consideration of internal unboundedness.

\begin{lemma}[{\cite[Lemma 6]{vialecovering}}] 
	Let $\lambda > 2^\omega$ be a cardinal with cofinality $\omega$. 
	Then there exists a matrix 
	$$
	\langle K(n,\beta) : n < \omega, \ \beta < \lambda^+ \rangle
	$$
	of sets of size less than $\lambda$ satisfying:
	\begin{enumerate}
		\item for all $\beta < \lambda^+$, $\beta = \bigcup \{ K(n,\beta) : n < \omega \}$;
		\item for all $\beta < \lambda^+$ and $m < n < \omega$, 
		$K(m,\beta) \subseteq K(n,\beta)$;
		\item for all $\gamma < \beta < \lambda^+$ there exists $m < \omega$ such that 
		for all $m \le n < \omega$, $K(n,\gamma) \subseteq K(n,\beta)$;
		\item for all $x \in [\lambda^+]^\omega$ there exists $\gamma < \lambda^+$ 
		such that for all $\gamma < \beta < \lambda^+$, 
		there exists $m < \omega$ such that for all $m \le n < \omega$, 
		$K(n,\beta) \cap x = K(n,\gamma) \cap x$.
		\end{enumerate} 
	\end{lemma}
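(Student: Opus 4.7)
My plan is to construct the covering matrix from a pcf-theoretic scale on $\lambda$. First I would fix a strictly increasing cofinal sequence $\langle \lambda_n : n < \omega \rangle$ of regular cardinals below $\lambda$ with $\lambda_0 > 2^\omega$. Since $\cf(\lambda) = \omega$, $\lambda > 2^\omega$, and $\mu^\omega < \lambda$ for every $\mu < \lambda$, Shelah's pcf theory produces a \emph{good} scale $\langle g_\alpha : \alpha < \lambda^+ \rangle$ in $\prod_n \lambda_n$: a $<^*$-increasing and $<^*$-cofinal sequence of functions with $g_\alpha(n) < \lambda_n$ for all $\alpha, n$.

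With the scale in hand, I would define
\[
K(n,\beta) \;:=\; \{\, \alpha < \beta : g_\alpha(k) \le g_\beta(k) \text{ for all } k \ge n \,\}.
\]
Each $K(n,\beta)$ has size less than $\lambda$ (goodness of the scale is used here to control the number of $\alpha < \beta$ whose $g$-value is tail-dominated by $g_\beta$). Property (2) is immediate, since the defining tail condition weakens as $n$ grows. Property (1) follows from the $<^*$-monotonicity of the scale, which forces any $\alpha < \beta$ to lie in $K(n,\beta)$ for all sufficiently large $n$. Property (3) follows similarly: given $\gamma < \beta$, the relation $g_\gamma <^* g_\beta$ produces an $m$ past which $g_\gamma(k) \le g_\beta(k)$, and then $K(n,\gamma) \subseteq K(n,\beta)$ for $n \ge m$ by transitivity of the tail inequality.

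The main obstacle is (4). Given $x \in [\lambda^+]^\omega$, define $\psi_x \in \prod_n \lambda_n$ by $\psi_x(n) := \sup\{\, g_\alpha(n) : \alpha \in x \,\}$; this supremum lies strictly below $\lambda_n$ since $x$ is countable and each $\lambda_n > 2^\omega \ge \omega_1$ is regular. By $<^*$-cofinality of the scale, choose $\gamma < \lambda^+$ with $\gamma > \sup(x)$ and $\psi_x <^* g_\gamma$. For any $\beta > \gamma$ and any $\alpha \in x$, we have $\alpha < \gamma < \beta$ and $g_\alpha(k) \le \psi_x(k) < g_\gamma(k) \le g_\beta(k)$ for all large $k$, so $\alpha \in K(n,\gamma) \cap K(n,\beta)$ for sufficiently large $n$. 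The remaining task is to make the choice of ``sufficiently large $n$'' uniform across all $\alpha \in x$, so that a single $m$ works; this uniform stabilization is the technical core of the argument, and is exactly what \emph{goodness} of the scale is designed to provide.
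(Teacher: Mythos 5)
Your approach is genuinely different from the paper's. The paper builds the matrix from a coherent sequence of surjections $g_\beta : \lambda \to \beta$, chosen so that for every $\gamma < \beta$ one has $g_\gamma[\lambda_n] \subseteq g_\beta[\lambda_n]$ for all large $n$; it then defines $K(n,\beta) := g_\beta[\lambda_n] \cup \bigcup \{ K(n,\gamma) : \gamma \in g_\beta[\lambda_n] \}$ by recursion, which immediately gives $|K(n,\beta)| \le \lambda_n < \lambda$, and it proves (4) by a pigeonhole argument: for a fixed $x$ there are only $2^\omega < \lambda$ possible trace functions $n \mapsto K(n,\beta) \cap x$, so some $\lambda^+$-sized set of $\beta$'s shares one, and property (3) sandwiches an arbitrary $\beta$ between two members of that set. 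No pcf theory enters at all.

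Your scale-based route has three genuine gaps. First, the existence of a \emph{good} scale on $\lambda$ is not a theorem of \textsf{ZFC}: it follows from weak square or approachability at $\lambda$, but its failure is consistent, and it is precisely the kind of principle that fails in the setting where this lemma is applied (under $\textsf{ISP}(\kappa)$). Since the lemma is stated and used as a pure \textsf{ZFC} fact, you cannot assume goodness. Second, the bound $|K(n,\beta)| < \lambda$ is asserted but not proved. The natural estimate is that $\alpha \mapsto g_\alpha \upharpoonright [n,\omega)$ injects $K(n,\beta)$ into $\prod_{k \ge n} (g_\beta(k)+1)$, which only bounds $|K(n,\beta)|$ by $\prod_k \lambda_k = \lambda^{\aleph_0} > \lambda$; goodness of the point $\beta$ is a statement about increasing cofinal sequences below $\beta$ and does not obviously repair this. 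Third, you explicitly leave the uniformization step in (4) --- producing a single $m$ that works for every $\alpha \in x$ --- unproved, deferring it to ``goodness''; but $x$ is an arbitrary countable subset of $\lambda^+$, not a cofinal subset of a good point, so goodness does not deliver it. Note that your matrix does satisfy the exact coherence in (3), so you could close (4) by the paper's counting argument using $2^\omega < \lambda$; that would repair the third gap, but the first two would remain. You also quietly import the hypothesis that $\mu^\omega < \lambda$ for all $\mu < \lambda$, which is not part of the lemma's statement.
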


\begin{proof}
	Fix an increasing sequence of uncountable 
	cardinals $\langle \lambda_n : n < \omega \rangle$ cofinal in 
	$\lambda$. 
	By a straightforward argument, it is possible to 
	fix, for each $\beta < \lambda^+$, 
	a surjection $g_\beta : \lambda \to \beta$ satisfying 
	that for all $\gamma < \beta$ there exists $m$ 
	such that for all $n \ge m$, 
	$g_\gamma[\lambda_n] \subseteq 
	g_\beta[\lambda_n]$. 
	 
	Define $K(n,\emptyset) := \emptyset$ for all $n < \omega$. 
	Now fix $\beta < \lambda^+$ and assume that $K(n,\gamma)$ is defined for all $n < \omega$ 
	and $\gamma < \beta$. 
	Define for each $n < \omega$ 
	$$
	K(n,\beta) := g_\beta[\lambda_n] \cup \bigcup \{ K(n,\gamma) : \gamma \in g_\beta[\lambda_n] \}.
	$$
	This completes the definition. 
	It is easy to prove by induction that (1), (2), and 
	(3) hold, and each $K(n,\beta)$ has 
	size at most $\lambda_n$.

	For (4), fix $x \in [\lambda^+]^\omega$. 
	For each $\beta < \lambda^+$, define a function $f_\beta : \omega \to P(x)$ by 
	$f_\beta(n) := K(n,\beta) \cap x$. 
	Observe that there are $2^\omega$ many possibilities for such a function $f_\beta$. 
	Since $2^\omega < \lambda$, we can fix a set $S \subseteq \lambda^+$ of size $\lambda^+$ 
	and a function $f$ such that for all $\beta \in S$, $f_\beta = f$.
	Let $\gamma := \min(S)$.
	
	To verify that (4) holds for $x$, 
	consider $\beta > \gamma$. 
	Let $\xi := \min(S \setminus \beta)$. 
	Using (3), fix $m$ such that for all $n \ge m$, 
	$$
	K(n,\gamma) \subseteq K(n,\beta) \subseteq K(n,\xi).
	$$
	In particular, 
	$K(n,\gamma) \cap x \subseteq K(n,\beta) \cap x$. 
	For the reverse inclusion, 
	$$
	K(n,\beta) \cap x \subseteq K(n,\xi) \cap x = 
	f_\xi(n) = f(n) = f_{\gamma}(n) = 
	K(n,\gamma) \cap x.
	$$
\end{proof}

\begin{lemma}[{\cite[Fact 9]{vialecovering}}]
	Let $\lambda > 2^\omega$ be a singular cardinal 
	with cofinality $\omega$ 
	such that 
	for all cardinals $\mu < \lambda$, $\mu^\omega < \lambda$. 
	Fix $\langle K(n,\beta) : n < \omega, \ \beta < \lambda^+ \rangle$ as described in Lemma 2.3. 
	Assume that there exists a set $S \subseteq \lambda^+$ of size $\lambda^+$ 
	such that for all $x \in [S]^\omega$, there exists $n < \omega$ and $\beta < \lambda^+$ 
	such that $x \subseteq K(n,\beta)$. 
	Then $\lambda^\omega = \lambda^+$.
	\end{lemma}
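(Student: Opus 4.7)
The plan is to prove the nontrivial inequality $\lambda^\omega \le \lambda^+$ by a direct counting argument exploiting the matrix from Lemma 2.3 and the covering hypothesis on $S$. The reverse inequality $\lambda^+ \le \lambda^\omega$ is immediate from K\"onig's theorem, since $\cf(\lambda) = \omega$ forces $\lambda^\omega > \lambda$.

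The first step is to apply the covering hypothesis on $S$ to obtain the inclusion
$$
[S]^\omega \subseteq \bigcup \bigl\{ [K(n,\beta)]^\omega : n < \omega, \ \beta < \lambda^+ \bigr\},
$$
and then to estimate the cardinality of the right-hand side. By the construction in Lemma 2.3, each $K(n,\beta)$ has size less than $\lambda$ (in fact, at most $\lambda_n$), so the hypothesis that $\mu^\omega < \lambda$ for every cardinal $\mu < \lambda$ yields $|[K(n,\beta)]^\omega| < \lambda$. The union is indexed by $\omega \times \lambda^+$, a set of cardinality $\lambda^+$, so the total cardinality of the union is at most $\lambda^+ \cdot \lambda = \lambda^+$. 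Hence $|[S]^\omega| \le \lambda^+$.

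The final step is to convert this into the desired bound on $\lambda^\omega$. Since $|S| = \lambda^+ \ge \omega_1$, the standard identity $|[S]^\omega| = |S|^\omega$ applies, giving $(\lambda^+)^\omega \le \lambda^+$. Therefore $\lambda^\omega \le (\lambda^+)^\omega \le \lambda^+$, and combined with the K\"onig bound this yields $\lambda^\omega = \lambda^+$. There is no genuine obstacle in this lemma: the argument is pure cardinal arithmetic once Lemma 2.3 and the covering hypothesis on $S$ are available. The substantive difficulty in the overall proof that $\textsf{ISP}(\kappa)$ implies \textsf{SCH} lies in producing such an $S$ from a guessing model, not in the present lemma.
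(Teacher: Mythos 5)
Your argument is correct and is essentially identical to the paper's proof: both use the inclusion $[S]^\omega \subseteq \bigcup \{ [K(n,\beta)]^\omega : n < \omega, \ \beta < \lambda^+ \}$, the bound $|[K(n,\beta)]^\omega| < \lambda$ from the hypothesis $\mu^\omega < \lambda$, and the identity $|[S]^\omega| = (\lambda^+)^\omega = \lambda^\omega$ together with K\"onig's theorem to conclude $\lambda^\omega = \lambda^+$. Nothing further is needed.
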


\begin{proof}
	Since $S$ has size $\lambda^+$, the cardinality of $[S]^\omega$ is equal to $(\lambda^+)^\omega$, 
	which in turn equals $\lambda^\omega$. 
	So it suffices to show that $[S]^\omega$ has cardinality $\lambda^+$. 
	By assumption, every member of $[S]^\omega$ is a subset of $K(n,\beta)$ for some $n < \omega$ 
	and $\beta < \kappa^+$. 
	Thus,
	$$
	[S]^\omega \subseteq \bigcup \{ [K(n,\beta)]^\omega : n < \omega, \ \beta < \lambda^+ \}.
	$$
	Now each $K(n,\beta)$ has cardinality less than $\lambda$, so by our assumptions, 
	$[K(n,\beta)]^\omega$ has cardinality less than $\lambda$. 
	Thus, the union in the above inclusion has cardinality $\lambda^+$.
	\end{proof}

Assume that $\textsf{ISP}(\kappa)$ holds, and let $\lambda > \kappa$ be a singular cardinal 
of cofinality $\omega$ such that for all $\mu < \lambda$, $\mu^\omega < \lambda$. 
We will prove that $\lambda^\omega = \lambda^+$.

	Fix $\mathcal K = \langle K(n,\beta) : n < \omega, \ \beta < \lambda^+ \rangle$ as described in Lemma 2.3. 
	In order to show that $\lambda^\omega = \lambda^+$, by Lemma 2.4 it suffices to show that 
	there exists a set $S \subseteq \lambda^+$ of size $\lambda^+$ 
	such that for all $x \in [S]^\omega$, 
	there exists $n < \omega$ and $\beta < \lambda^+$ 
	such that $x \subseteq K(n,\beta)$.

	Using $\textsf{ISP}(\kappa)$, 
	fix an elementary substructure $N$ of $H(\lambda^{++})$ 
	of size less than $\kappa$ such that $N \cap \kappa \in \kappa$, 
	$\mathcal K \in N$, and $N$ is guessing. 
	For each $x \in [\lambda^+]^\omega$, let $\gamma_x < \lambda^+$ be the minimal ordinal 
	satisfying that for all 
	$\gamma_x < \beta < \lambda^+$, there exists $n$ such that for all $m \ge n$, 
	$K(m,\beta) \cap x = K(m,\gamma_x) \cap x$. 
	Observe that $\langle \gamma_x : x \in [\lambda^+]^\omega \rangle$ is a member of 
	$N$ by elementarity.
	
	Consider $x \in N \cap [\lambda^+]^\omega$. 
	Then $\gamma_x \in N \cap \lambda^+$. 
	So there exists $n$ such that for all $m \ge n$, 
	$$
	K(m,\sup(N \cap \lambda^+)) \cap x = K(m,\gamma_x) \cap x.
	$$
	Since $x$, $\gamma_x$, and $\mathcal K$ are in $N$, 
	$K(m,\gamma_x) \cap x$ is a member of $N$. 
	Therefore, 
	$$
	K(m,\sup(N \cap \lambda^+)) \cap x \in N.
	$$
	Now for each $x \in N \cap [\lambda^+]^\omega$, fix the smallest integer 
	$k_x$ satisfying that for all $m \ge k_x$, 
	$K(m,\sup(N \cap \lambda^+)) \cap x$ is in $N$.
	
	We claim that if $x$ and $y$ are in $N \cap [\lambda^+]^\omega$ 
	and $x \subseteq y$, then $k_x \le k_y$. 
	By the minimality of $k_x$, it suffices to show that 
	for all $m \ge k_y$, $K(m,\sup(N \cap \lambda^+)) \cap x \in N$. 
	Let $m \ge k_y$. 
	Then $K(m,\sup(N \cap \lambda^+)) \cap y \in N$. 
	Since $x$ is in $N$ and $x \subseteq y$, we have that 
	$K(m,\sup(N \cap \lambda^+)) \cap x = (K(m,\sup(N \cap \lambda^+)) \cap y) \cap x$ 
	is in $N$.
	
	Next, we claim that the collection of integers 
	$$
	A := \{ k_x : x \in N \cap [\lambda^+]^\omega \}
	$$
	is finite. 
	Suppose for a contradiction that $A$ is infinite. 
	For each $n \in A$, fix $x_n \in N \cap [\lambda^+]^\omega$ such that $n = k_{x_n}$. 
	Now define, for each $n \in A$, $y_n := \bigcup \{ x_k : k \in A \cap (n+1) \}$, 
	which is in $N \cap [\lambda^+]^\omega$. 
	Observe that if $m < n$ are in $A$, then $y_m \subseteq y_n$. 
	Also, for each $n \in A$, $x_n \subseteq y_n$, and therefore by the previous paragraph, 
	$n = k_{x_n} \le k_{y_n}$. 
	By thinning out the sequence $\langle y_n : n \in A \rangle$ if necessary, it is easy to find a 
	sequence $\langle z_n : n < \omega \rangle$ of distinct sets in $N \cap [\lambda^+]^\omega$ satisfying that 
	for all $m < n$, $z_m \subseteq z_n$ and 
	$k_{z_m} < k_{z_n}$.
	
	We now consider two possibilities, both of which will lead to a contradiction. 
	First, assume that there exists a countable set $\mathcal X \in N$ such that 
	$$
	|\mathcal X \cap \{ z_n : n < \omega \}| = \omega.
	$$
	By intersecting $\mathcal X$ with $[\lambda^+]^\omega$ if necessary, we may assume without loss of generality 
	that $\mathcal X \subseteq [\lambda^+]^\omega$. 
	Since $\mathcal X$ is countable and consists of countable sets, $x^* := \bigcup \mathcal X$ is in 
	$N \cap [\lambda^+]^\omega$. 
	We claim that for all $m < \omega$, $z_m \subseteq x^*$. 
	Indeed, given $m$, we can find $n \ge m$ such that $z_n \in \mathcal X$. 
	Then $z_m \subseteq z_n \subseteq \bigcup \mathcal X = x^*$. 
	Now for all $n < \omega$, $z_n \subseteq x^*$ implies that $k_{z_n} \le k_{x^*}$. 
	This is impossible, since $\{ k_{z_n} : n < \omega \}$ is unbounded in $\omega$, 
	whereas $k_{x^*} < \omega$.

	Secondly, assume that for all countable sets $\mathcal X \in N$, 
	$\mathcal X \cap \{ z_n : n < \omega \}$ is finite. 
	Then in particular, for all countable sets $\mathcal X \in N$, 
	$\mathcal X \cap \{ z_n : n < \omega \}$ is a member of $N$. 
	Also note that this assumption implies that 
	$\{ z_n : n < \omega \}$ is not in $N$, 
	for otherwise we could let $\mathcal X$ 
	be equal to it and get a contradiction. 
	Since $N$ is guessing, it follows that there exists $E \in N$ such that 
	$\{ z_n : n < \omega \} = N \cap E$. 
	In particular, $N \cap E$ is countable. 
	Since $\omega_1 \subseteq N$, this implies that $E$ is countable, for otherwise by elementarity 
	$N \cap E$ would be uncountable. 
	Therefore, $E \subseteq N$. 
	So $\{ z_n : n < \omega \} = N \cap E = E$, and hence $\{ z_n : n < \omega \}$ is a member of $N$, 
	which is a contradiction.

	This concludes the proof that the set 
	$A = \{ k_x : x \in N \cap [\lambda^+]^\omega \}$ is finite. 
	Let $n^*$ be the largest member of $A$. 
	Then for all $x \in N \cap [\lambda^+]^\omega$, $k_x \le n^*$ implies that for all $m \ge n^*$, 
	$K(m,\sup(N \cap \lambda^+)) \cap x \in N$. 
	It easily follows that for all $m \ge n^*$, for any countable set $Y \in N$, 
	$K(m,\sup(N \cap \lambda^+)) \cap Y \in N$. 
	Since $N$ is guessing, for all $m \ge n^*$ there exists a set $E_m \in N$ 
	such that $N \cap K(m,\sup(N \cap \lambda^+)) = N \cap E_m$. 
	By intersecting $E_m$ with $\lambda^+$ if necessary, we may assume without loss 
	of generality that $E_m \subseteq \lambda^+$.

	Since $\sup(N \cap \lambda^+)$ is equal to 
	$\bigcup \{ K(m,\sup(N \cap \lambda^+)) : m < \omega \}$, we have that 
	$$
	N \cap \sup(N \cap \lambda^+) = \bigcup \{ N \cap K(m,\sup(N \cap \lambda^+)) : m < \omega \}.
	$$
	As $\cf(\sup(N \cap \lambda^+))$ is uncountable, there exists $m \ge n^*$ such that 
	$N \cap K(m,\sup(N \cap \lambda^+)) = N \cap E_m$ is unbounded in $\sup(N \cap \lambda^+)$. 
	By elementarity, it easily follows that the set $S := E_m$ is unbounded in $\lambda^+$. 
	To complete the proof, it suffices to show that for all 
	$x \in [S]^\omega$, there exists $n < \omega$ and $\beta < \lambda^+$ 
	such that $x \subseteq K(n,\beta)$. 
	Since $S \in N$, by elementarity it suffices to show that for all $x \in N \cap [S]^\omega$, 
	there exists $n < \omega$ and $\beta < \lambda^+$ such that $x \subseteq K(n,\beta)$. 
	Fix $x \in N \cap [S]^\omega$. 
	Then $x \subseteq N \cap S = N \cap E_m = N \cap K(m,\sup(N \cap \lambda^+))$. 
	By elementarity, there exists $\beta \in N \cap \lambda^+$ such that $x \subseteq K(m,\beta)$.

\section{Approximation and covering}

In Section 1 we saw that guessing implies 
internally unbounded for elementary substructures. 
In this section we provide analogous results concerning the 
approximation property implying the covering property, for models and forcing posets.

\begin{definition}
	Let $\kappa$ be a regular uncountable cardinal. 
	Let $W_1 \subseteq W_2$ be transitive (sets or classes) with $\kappa \in W_1$. 
	\begin{enumerate}
		\item  The pair $(W_1,W_2)$ is said to have the \emph{$\kappa$-approximation property} 
	provided that whenever $X \in W_2$ is a bounded subset of $W_1$, 
	if $X \cap y \in W_1$ for any set $y \in W_1$ such that $W_1 \models |y| < \kappa$, 
	then $X \in W_1$;
		\item The pair $(W_1,W_2)$ is said to have the \emph{$\kappa$-covering property} if whenever 
		$X \in W_2$ is a bounded subset of $W_1$, 
		if $W_2 \models |X| < \kappa$, then there exists $Y \in W_1$ such that 
		$W_1 \models |Y| < \kappa$ and $X \subseteq Y$.
	\end{enumerate}
\end{definition}

\begin{definition}
	Let $\kappa$ be a regular uncountable cardinal and $\p$ a forcing poset. 
	We say that $\p$ has the \emph{$\kappa$-approximation property} if $\p$ forces that 
	$(V,V^\p)$ has the $\kappa$-approximation property, and has the \emph{$\kappa$-covering property} 
	if $\p$ forces that $(V,V^\p)$ has the $\kappa$-covering property. 
\end{definition}

\begin{thm}
	Let $\kappa$ be a regular uncountable cardinal and $W_1 \subseteq W_2$ be 
	transitive models of \textsf{ZFC} minus power set such that $\kappa \in W_1$. 
	Assume that for all $W_2$-cardinals $\mu < \kappa$, any subset of $W_1$ which is a member of 
	$W_2$ and has $W_2$-cardinality less than $\mu$ is a member of $W_1$. 
	If $(W_1,W_2)$ has the $\kappa$-approximation property, then it has the $\kappa$-covering property.
	\end{thm}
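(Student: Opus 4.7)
The plan is to transcribe the argument of Theorem 1.4 into the forcing-theoretic setting: $W_1$ plays the role of the elementary substructure $N$, the $\kappa$-approximation property replaces guessing, and the standing hypothesis that every subset of $W_1$ in $W_2$ of sufficiently small $W_2$-cardinality is already in $W_1$ replaces the closure of $N$ under finite subsets. Given $X \in W_2$ a bounded subset of $W_1$ with $|X|^{W_2} < \kappa$, I set $\mu := |X|^{W_2}$, fix $Z \in W_1$ containing $X$, enumerate $X = \{x_\alpha : \alpha < \mu\}$ in $W_2$, and form the $\subseteq$-increasing initial segments $X_\alpha := \{x_\beta : \beta < \alpha\}$ for $\alpha < \mu$. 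Since $|X_\alpha|^{W_2} < \mu$ and $\mu$ is a $W_2$-cardinal below $\kappa$, the standing hypothesis puts each $X_\alpha$ in $W_1$; the same hypothesis applied to a $W_2$-bijection from $|X_\alpha|^{W_2}$ onto $X_\alpha$ (itself a small subset of $W_1$) transfers that bijection into $W_1$, giving $|X_\alpha|^{W_1} < \kappa$.

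Next I split into two cases parallel to Theorem 1.4. In Case 1, some $\mathcal X \in W_1$ with $|\mathcal X|^{W_1} < \kappa$ satisfies that $\{\alpha < \mu : X_\alpha \in \mathcal X\}$ is unbounded in $\mu$; after trimming $\mathcal X$ via separation to $\{A \in \mathcal X : A \subseteq Z \text{ and } |A|^{W_1} < \kappa\}$, the union $Y := \bigcup \mathcal X$ lies in $W_1$, has $W_1$-cardinality less than $\kappa$ by regularity of $\kappa$ in $W_1$, and covers $X$ using the increasing cofinal $X_\alpha$'s. In Case 2, for every such $\mathcal X$ the intersection $\mathcal X \cap \{X_\alpha : \alpha < \mu\}$ is contained in some $\{X_\alpha : \alpha < \gamma\}$ with $\gamma < \mu$, hence has $W_2$-cardinality less than $\mu$ and therefore lies in $W_1$ by the standing hypothesis. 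The $\kappa$-approximation property then gives $\{X_\alpha : \alpha < \mu\} \in W_1$; since this set is $W_1$-well-ordered by $\subseteq$ of order type $\mu$ it has $W_1$-cardinality less than $\kappa$, and combined with $|X_\alpha|^{W_1} < \kappa$ and regularity of $\kappa$ in $W_1$ this yields $X = \bigcup_\alpha X_\alpha \in W_1$ with $|X|^{W_1} < \kappa$, so $Y := X$ is the desired cover.

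The main technical hurdle will be verifying that $\{X_\alpha : \alpha < \mu\}$ is a \emph{bounded} subset of $W_1$ in order to invoke the $\kappa$-approximation property in Case 2, since without power set $W_1$ need not contain $\mathcal P(Z)$. I expect to handle this by applying the approximation property instead to the graph $G := \{(\alpha, x_\alpha) : \alpha < \mu\} \subseteq \mu \times Z$, whose boundedness in $W_1$ is automatic from $\mu, Z \in W_1$; the same dichotomy can then be run in terms of $G$, and $X$ recovered as its range. With that issue dispatched, the rest is a direct transcription of the proof of Theorem 1.4.
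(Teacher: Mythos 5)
Your main line of argument is exactly the paper's proof of this theorem: pass to the $\subseteq$-increasing initial segments $X_\alpha$ of an enumeration of $X$ (each of which lands in $W_1$ by the smallness hypothesis), and split according to whether some $\mathcal X \in W_1$ of $W_1$-cardinality less than $\kappa$ captures cofinally many of the $X_\alpha$ (take $\bigcup \mathcal X$ as the cover) or not (apply the $\kappa$-approximation property to $\{X_\alpha : \alpha < \mu\}$). Your Case 1 and Case 2 match the paper's, modulo the cosmetic difference that the paper phrases the dichotomy in terms of $W_2$-cardinality $=\mu$ versus $<\mu$ and ends Case 2 in a contradiction rather than concluding $X \in W_1$ directly; your trimming of $\mathcal X$ by separation is if anything slightly cleaner than the paper's intersection with $([Y]^{<\mu})^{W_1}$.

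The problem is your proposed fix for the boundedness of $\{X_\alpha : \alpha < \mu\}$. Replacing that family by the graph $G = \{(\alpha, x_\alpha) : \alpha < \mu\}$ does make boundedness automatic, but ``running the same dichotomy in terms of $G$'' destroys the argument in the unbounded case. If some $y \in W_1$ of small $W_1$-cardinality contains $\mu$ many pairs of $G$, all you learn is that $\mu$ many elements of $X$ lie in $\ran(y)$; since $|X| = \mu$, this can be a proper (indeed co-large in no useful sense) subset of $X$, and nothing extractable from $y$ covers $X$. The entire point of passing to the monotone initial segments --- here exactly as in Theorem 1.4 --- is that capturing an unbounded subfamily of them captures their union, which is all of $X$; the graph has no such feature, and there is no evident way to convert a $y$ witnessing the graph-version of Case 1 into an $\mathcal X$ witnessing the initial-segment version. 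So you must keep the dichotomy on the $X_\alpha$ themselves and bound that family some other way. The paper does this by taking $([Y]^{<\mu})^{W_1}$ as the bounding set; if you are worried that this is not literally a set of $W_1$ under \textsf{ZFC} minus power set, that is a concern about the paper's own proof (harmless in the intended applications, where $W_1 = V$ models full \textsf{ZFC}), but your graph substitute is not a repair of it.
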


\begin{proof}
	Let $x \in W_2$ satisfy that $W_2 \models |x| < \kappa$ and $x \subseteq Y$ 
	for some $Y \in W_1$. 
	We will prove that $x$ is covered by some set in $W_1$ which has $W_1$-cardinality less than $\kappa$. 
	Define $\mu := |x|^{W_2}$. 
	Since $x$ has cardinality $\mu$ in $W_2$, fix a bijection $g : \mu \to x$ in $W_2$, and define for 
	each $i < \mu$ $x_i := g[i]$. 
	Then the sequence $\langle x_i : i < \mu \rangle$ is in $W_2$, is $\subseteq$-increasing, and has 
	union equal to $x$. 
	Moreover, each $x_i$ has size less than $\mu$ in $W_2$, hence is in $W_1$ by our assumptions, 
	and has $W_1$-cardinality less than $\mu$.

	We consider two possibilities. 
	First, assume that there exists a set $\mathcal X \in W_1$ of $W_1$-cardinality less than $\kappa$ such that 
	$$
	W_2 \models | \mathcal X \cap \{ x_i : i < \mu \} | = \mu.
	$$
	By intersecting $\mathcal X$ with $([Y]^{<\mu})^{W_1}$ if necessary, we may assume without loss of generality that 
	$\mathcal X \subseteq ([Y]^{<\mu})^{W_1}$. 
	Since $\mu < \kappa$, $z := \bigcup \mathcal X$ is a subset of $Y$ of $W_1$-cardinality less than $\kappa$. 
	For all $i < \mu$, there exists $j > i$ in $\mu$ such that $x_j \in \mathcal X$, 
	so $x_i \subseteq x_j \subseteq z$. 
	Hence, $z$ is a member of $W_1$ of $W_1$-cardinality less than $\kappa$ such that 
	$x = \bigcup \{ x_i : i < \mu \}$ is a subset of $z$, as required.
	
	Secondly, assume that for all $\mathcal X \in W_1$ of $W_1$-cardinality less than $\kappa$, 
	$$
	W_2 \models | \mathcal X \cap \{ x_i : i < \mu \} | < \mu.
	$$
	Since each $x_i$ is a member of $W_1$, it follows from our assumptions that 
	$\mathcal X \cap \{ x_i : i < \mu \}$ is a member of $W_1$. 
	Also, the set $\{ x_i : i < \mu \}$ is a subset of a member of $W_1$, namely 
	the set $([Y]^{<\mu})^{W_1}$. 
	As the pair $(W_1,W_2)$ has the $\kappa$-approximation property, it follows that 
	$\{ x_i : i < \mu \}$ is a member of $W_1$. 
	This is impossible, since letting $\mathcal X$ be equal 
	to $\{ x_i : i < \mu \}$, 
	we get a contradiction 
	to the assumption of this case.
\end{proof}

\begin{corollary}
	Let $\lambda$ be a regular cardinal and $\p$ 
	a forcing poset. 
	Assume that $\p$ is $<\!\lambda$-distributive. 
	If $\p$ has the $\lambda^+$-approximation property, 
	then $\p$ has the $\lambda^+$-covering property. 	
	\end{corollary}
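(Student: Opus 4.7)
The plan is to apply Theorem 3.3 inside the generic extension. Fix a $\p$-generic filter $G$, set $W_1 := V$, $W_2 := V[G]$, and $\kappa := \lambda^+$. By hypothesis the pair $(V, V[G])$ has the $\lambda^+$-approximation property, so once the remaining hypothesis of Theorem 3.3 is verified in $V[G]$, its conclusion yields the $\lambda^+$-covering property for the pair, which by Definition 3.2 is exactly the statement that $\p$ has the $\lambda^+$-covering property.

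Before applying the theorem I would dispose of two setup points. First, $\lambda$ is preserved by $<\!\lambda$-distributivity, and $\lambda^+$ itself remains a regular uncountable cardinal in $V[G]$; the latter is a standard consequence of combining $<\!\lambda$-distributivity with the $\lambda^+$-approximation property, and I would cite it rather than reprove it. Second, and this is the real content, I need the hypothesis of Theorem 3.3: for every $V[G]$-cardinal $\mu < \lambda^+$, every $X \in V[G]$ with $X \subseteq V$ and $|X|^{V[G]} < \mu$ lies in $V$. This is the one place where $<\!\lambda$-distributivity is actively used.

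The verification of this hypothesis is short. Since every infinite $V[G]$-cardinal is a $V$-cardinal, and $\lambda^+$ is a cardinal in both models, the constraint $\mu < \lambda^+$ forces $\mu \le \lambda$, so $|X|^{V[G]} < \lambda$. Thus $X$ can be enumerated in $V[G]$ by a sequence of length strictly less than $\lambda$, and $<\!\lambda$-distributivity of $\p$ says that no such sequence is new. Hence the enumeration, and therefore $X$, already lies in $V$. With this hypothesis in hand, Theorem 3.3 applied inside $V[G]$ immediately yields the $\lambda^+$-covering property. The only step I would flag as a genuine obstacle is the preservation of $\lambda^+$; everything else is a direct matching of the setting to the hypotheses of Theorem 3.3.
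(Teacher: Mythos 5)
Your proposal follows the same route as the paper: reduce to Theorem 3.3 applied to the pair $(V, V[G])$ with $\kappa = \lambda^+$, and verify the remaining hypothesis by noting that any set in $V[G]$ of elements of $V$ of $V[G]$-cardinality below some $\mu \le \lambda$ can be enumerated by a sequence of length less than $\lambda$ and hence lies in $V$ by $<\!\lambda$-distributivity. That verification is fine. The one point to press on is the step you call a genuine obstacle and then outsource to a citation, namely the preservation of $\lambda^+$: this is not a side condition but is precisely the entire content of the paper's proof of the corollary, so leaving it as ``standard, would cite'' leaves your write-up incomplete where the actual work lies. The argument is short and uses exactly the two hypotheses you name. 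Suppose $\lambda^+$ were collapsed; then in $V[G]$ there is a cofinal $x \subseteq (\lambda^+)^V$ of order type at most $\lambda$ (note that $\lambda$ itself is preserved by distributivity). For any $a \in V$ with $|a|^V \le \lambda$, the set $a \cap (\lambda^+)^V$ lies in $V$ and is bounded in $(\lambda^+)^V$ by regularity of $\lambda^+$ in $V$, so $a \cap x$ is contained in a proper initial segment of $x$ and hence is a set of ordinals of order type less than $\lambda$; by $<\!\lambda$-distributivity it belongs to $V$. Since $x$ is a bounded subset of $V$ (it is contained in $(\lambda^+)^V \in V$), the $\lambda^+$-approximation property now puts $x$ itself in $V$, contradicting the regularity of $\lambda^+$ in $V$. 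With that supplied, your proof is complete and coincides with the paper's.
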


\begin{proof}
	By Theorem 3.3, it suffices to show that $\p$ 
	preserves $\lambda^+$. 
	If not, then there exists a cofinal set 
	$x \subseteq (\lambda^+)^V$ in $V^{\p}$ of order 
	type at most $\lambda$. 
	If $a \in V$ has $V$-cardinality less than 
	$(\lambda^+)^V$, 
	then $a \cap x$ is bounded in $(\lambda^+)^V$, 
	and hence has order type less than $\lambda$. 
	As $\p$ is $<\!\lambda$-distributive, 
	$a \cap x \in V$. 
	Since $\p$ has the $\lambda^+$-approximation property, 
	it follows that $x \in V$, which is impossible.
	\end{proof}

Observe that if $\kappa$ is weakly inaccessible or 
the successor of a singular cardinal, then a forcing poset 
$\p$ being $<\mu$-distributive for all cardinals 
$\mu < \kappa$ implies that $\p$ is $<\!\kappa$-distributive, 
and hence has the $\kappa$-covering property. 
That is why we restricted the statement of the corollary 
to successors of regulars.

\begin{corollary}
	If $\p$ is a forcing poset which has the $\omega_1$-approximation property, then $\p$ has the 
	$\omega_1$-covering property. 
	\end{corollary}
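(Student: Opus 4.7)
The plan is to derive Corollary 3.5 as an immediate instantiation of Corollary 3.4 with $\lambda = \omega$. Since $\lambda^+ = \omega_1$, I only need to verify the distributivity hypothesis, namely that $\p$ is $<\!\omega$-distributive. But $<\!\omega$-distributivity amounts to saying that the intersection of finitely many open dense sets is open dense, which holds for every forcing poset (finite intersections of dense sets are dense in any poset). Thus the hypothesis of Corollary 3.4 is automatically satisfied, and the conclusion follows.

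To make the argument self-contained I would also point out directly why Theorem 3.3 applies to the pair $(V, V^{\p})$ with $\kappa = \omega_1$: the side condition asks that for every $V^{\p}$-cardinal $\mu < \omega_1$, every subset of $V$ lying in $V^{\p}$ of $V^{\p}$-cardinality less than $\mu$ already lies in $V$. But such $\mu$ is finite, and every finite subset of $V$ is already an element of $V$ by transitivity, so this is trivial. The only real issue is that Theorem 3.3 tacitly requires $\omega_1$ to be a cardinal in $V^{\p}$, i.e.\ that $\p$ preserves $\omega_1$; this is handled exactly as in the proof of Corollary 3.4, since any new cofinal sequence in $\omega_1^V$ of order type $\omega$ would intersect each ground model set in a finite (hence ground model) set, contradicting the $\omega_1$-approximation property.

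There is essentially no obstacle here, because once the distributivity hypothesis is recognized as vacuous in the $\lambda = \omega$ case, the corollary reduces to the previous one. The only subtle point worth flagging is that the ground model absorbs all finite sets for free, which is why the cardinal $\omega_1$ requires no separate covering hypothesis below it, in contrast to the situation for $\lambda^+$ with $\lambda$ uncountable.
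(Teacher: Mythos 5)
Your proposal is correct and amounts to essentially the same argument as the paper, which applies Theorem 3.3 directly to $(V,V^{\p})$ with $\kappa=\omega_1$ using the fact that $V^{<\omega}\cap V^{\p}\subseteq V$; your instantiation of Corollary 3.4 at $\lambda=\omega$ (with $<\!\omega$-distributivity vacuous) reduces to exactly this, and your second paragraph spells out the same verification. The only nitpick is the parenthetical claim that finite intersections of \emph{dense} sets are dense — that is false in general and one needs \emph{open} dense, as you correctly state in the main clause.
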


This follows from the fact that $\p$ forces 
that $V^{<\omega} \cap V^\p \subseteq V$.

\bibliographystyle{plain}
\bibliography{paper36}

\end{document}